\newtheorem{theorem}{Theorem}[section]
\newtheorem{lemma}[theorem]{Lemma}
\theoremstyle{definition}
\theoremstyle{corollary}
\theoremstyle{remark}
\numberwithin{equation}{section}
\newcommand{\br}{\mathbb R}
\newcommand{\bn}{\mathbb N}
\title{A family of  singular ordinary  differential equations
of third order with an integral boundary condition}
\author{
{Mahdi Boukrouche\thanks{
Address : Lyon University, F-42023 Saint-Etienne,
Institut Camille Jordan CNRS UMR 5208,
23 rue  Paul Michelon 42023 Saint-Etienne Cedex 2, France.
Mahdi.Boukrouche@univ-st-etienne.fr}}
\and
Domingo A. Tarzia\thanks{
Adress: Departamento de Matem\'atica-CONICET,
FCE, Univ. Austral, Paraguay 1950, S2000FZF Rosario, Argentina.
DTarzia@austral.edu.ar}
}
\date{}
\begin{document}
\maketitle \normalsize

\begin{abstract}
We establish in this paper the equivalence between a Volterra
integral equation of second kind and a singular  ordinary differential
equation of third order with two initial conditions and an integral boundary
condition, with a real parameter.
This equivalence allow us to obtain the solution to some problems for nonclassical heat
equation, the continuous dependence of the solution with respect to the parameter and 
the corresponding explicit solution to the considered problem.

\smallskip

\noindent{\it Keywords} : Singular ordinary differential equation of third  order,
integral boundary condition,  Volterra integral equation, explicit solution, nonclassical heat equation.

\smallskip

\noindent{\it 2010 Mathematics Subject Classification} :
34A05, 34B10, 34B16, 35C15, 35K05, 35K20, 45D05, 45E10.
\end{abstract}

\section{Introduction}
We consider the following family of singular ordinary differential equations of third order  with an integral
boundary condition, indexed by a parameter $\lambda \in \mathbb{R}$ given by
\begin{eqnarray}\label{edo}
\left.
\begin{array}{ll}
 y^{(3)}(t) - \lambda^{2}y(t) = {\lambda\over 2\sqrt{\pi}}{1\over t^{3/2}},  \quad  t>0, \\ \\
 y(0)=1, \quad y'(0)= 0, \quad  y^{(2)}(1) = -{\lambda\over\sqrt{\pi}} + \lambda^{2}\int_{0}^{1} y(t) dt,
\end{array}
\right\}
\end{eqnarray}
where $y^{(n)}$ denotes the $n-$derivative of the function $y$.

Singular boundary value problems arise very frequently in fluid mechanics and in  other branches of applied
mathematics. There are results on the existence and asymptotic estimates of solutions for third order ordinary
differential equations with singularly perturbed boundary value problems, which depend on a small positive parameter
see for example \cite{Du2007, DuGeZh2005, Lin2013}, on third order ordinary
differential equations with singularly perturbed boundary value problems and with nonlinear coefficients or
boundary conditions see for example \cite{BeBoBo2008, Ch2015, LiUmAnKo2007, Zh2011}, on
 third order ordinary
differential equations with nonlinear  boundary value problems see for example \cite{DuGeLi2004, LiDoLi2008},
 on existence results for third order ordinary
differential equations see for example \cite{Du2011, JiLi2007},
and particularly third order ordinary
differential equations with  integral boundary conditions see for example
\cite{AlNtMeAhAl2015, BoBoMaBe2009, BoBoBeMa2014, GrKo2009, GuLiLi2012, PaXiCa2015, SuLi2010, WaGe2007, Zh2014}

In the last years there are several papers which consider integral or nonlocal boundary conditions on different
branches of applications, e.g. for the heat equations see for example \cite{CaLi1990, DaHu2007, De2005, De2007, HaThLeIv2014,
Ka2013, Li1999, Ma2009, MaVi2009A, MaVi2009B, MeBo2007}, for the wave equations \cite{MaWa2012}, for the  second
order ordinary differential equations see for example \cite{Bo2009, LiLILi2014, LvFe2016, ToDi2016, ZhFe2015, ZhFeGe2009, ZhXu2016},
for the fourth order ordinary differential equations  see for example \cite{SuXi2016, ZhGe2009-1}, for higher order
 ordinary differential equations see for example \cite{JiGuYa2015}, for fractional differential equations see for example
 \cite{HeLu2016, LiLiWu2015, WaGu2016}.

Our goal is to prove in Section 2 that the system (\ref{edo}) is equivalent to the following Volterra integral equation of second kind

\begin{eqnarray}\label{Vol}
 y(t)= 1- {2\lambda\over \sqrt{\pi}}\int_{0}^{t} y(\tau) \sqrt{t-\tau} d\tau, \quad t>0, \quad (\lambda \in \br)
\end{eqnarray}
which allows us to obtain the solution to some problems for nonclassical heat equation for any real parameter $\lambda$
(see \cite{BeTaVi2000, BoTA2016b, BoTA2016a, CeTaVi2015, SaTaVi2011, TaVi1998, Vi1986}).

In Section 3, we establish the dependence  of the family  of singular ordinary differential equations of third order  (\ref{edo})
with respect to the parameter $\lambda \in \br$ by using the equivalence with the Volterra integral equation (\ref{Vol}).

\section{Equivalence and existence results}

Preliminary, we give some results useful in the next sections.

\begin{lemma}\label{lem1}
 We have the following properties
 \begin{eqnarray}\label{eq3}
 \int_{0}^{t}\left(\int_{0}^{\tau} y(\xi) d\xi\right)d\tau = \int_{0}^{t} y(\tau)(t-\tau)  d\tau
 \end{eqnarray}
\begin{eqnarray}\label{eq4}
 \int_{0}^{t}\left(\int_{0}^{\xi}y(\tau)(t-\tau)d\tau\right) d\xi
 =\int_{0}^{t}  y(\tau)(t-\tau)^{2} d\tau
\end{eqnarray}
\begin{eqnarray}\label{eq5}
 \int_{0}^{t} {y(\tau)\over \sqrt{t-\tau}} d\tau = 2\sqrt{t} + 2 \int_{0}^{t} y'(\tau)\sqrt{t-\tau} d\tau
\end{eqnarray}
\begin{eqnarray}\label{eq6}
  \int_{\sigma}^{t} {\sqrt{\tau-\sigma}\over \sqrt{t-\tau}} d\tau ={\pi\over 2}(t-\sigma),
\end{eqnarray}
\begin{eqnarray}\label{eq7}
   \int_{\sigma}^{t} {d\tau\over \sqrt{t-\tau}\sqrt{\tau-\sigma}}= \pi.
\end{eqnarray}
\end{lemma}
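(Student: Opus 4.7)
The plan is to treat the five identities independently, since each follows from a standard manipulation and none relies on the others.

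For (\ref{eq3}) and (\ref{eq4}), I would invoke Fubini's theorem to swap the order of integration. In (\ref{eq3}) the domain of integration is the triangle $\{(\xi,\tau):0\le \xi\le \tau\le t\}$; interchanging the order gives $\xi$ running from $0$ to $t$ and $\tau$ from $\xi$ to $t$, so the inner integral $\int_\xi^t d\tau = t-\xi$ produces the weight $(t-\tau)$ after relabelling. For (\ref{eq4}) the analogous swap over $\{(\tau,\xi):0\le\tau\le\xi\le t\}$ yields an extra factor $\int_\tau^t d\xi=t-\tau$ which, combined with the $(t-\tau)$ already in the integrand, produces $(t-\tau)^2$.

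For (\ref{eq5}), I would integrate by parts with $u=y(\tau)$ and $dv=(t-\tau)^{-1/2}d\tau$, hence $v=-2\sqrt{t-\tau}$. The boundary term gives $-2y(\tau)\sqrt{t-\tau}\bigr|_0^t = 2y(0)\sqrt{t}$, and the remaining integral is $2\int_0^t y'(\tau)\sqrt{t-\tau}\,d\tau$. Inserting the initial condition $y(0)=1$ from (\ref{edo}) gives the stated formula; this implicit use of $y(0)=1$ is the only subtle point in the lemma, since it is not an assumption of the lemma's statement but of the ambient problem.

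For (\ref{eq6}) and (\ref{eq7}), the natural substitution $\tau=\sigma+(t-\sigma)u$ maps $[\sigma,t]$ onto $[0,1]$ and converts both integrals into Beta functions. Specifically, (\ref{eq6}) reduces to $(t-\sigma)\int_0^1 u^{1/2}(1-u)^{-1/2}du = (t-\sigma)B(3/2,1/2)$, and (\ref{eq7}) reduces to $\int_0^1 u^{-1/2}(1-u)^{-1/2}du = B(1/2,1/2)$. Using $\Gamma(1/2)=\sqrt{\pi}$ and $\Gamma(3/2)=\sqrt{\pi}/2$, together with $B(p,q)=\Gamma(p)\Gamma(q)/\Gamma(p+q)$, yields the values $\pi(t-\sigma)/2$ and $\pi$ respectively. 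There is no real obstacle here beyond careful bookkeeping; the main role of the lemma is to collect these five identities so the equivalence proof in Section 2 runs cleanly.
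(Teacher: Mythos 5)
Your proof is correct and takes essentially the same route as the paper: the paper dispatches (\ref{eq6})--(\ref{eq7}) by the identical substitution $\tau=\sigma+(t-\sigma)\xi$ and the Beta--Gamma identities, and simply asserts that (\ref{eq3})--(\ref{eq5}) ``follow from the simple integration process,'' which your Fubini and integration-by-parts arguments supply in detail. Your remark that (\ref{eq5}) silently uses $y(0)=1$ (a condition of the ambient problem (\ref{edo}) rather than a hypothesis of the lemma, without which the right-hand side would read $2y(0)\sqrt{t}+2\int_{0}^{t}y'(\tau)\sqrt{t-\tau}\,d\tau$) is accurate and is the one genuine subtlety the paper's one-line justification passes over.
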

\begin{proof}
 The first three properties (\ref{eq3})-(\ref{eq5}) follow from the the simple integration process.
 To prove (\ref{eq6}) we use the change of variable $\tau= \sigma + (t-\sigma)\xi$ then we obtain
 \begin{eqnarray*}
  \int_{\sigma}^{t} {\sqrt{\tau-\sigma}\over \sqrt{t-\tau}} d\tau
 &=&  (t-\sigma)\int_{0}^{1} \sqrt{{\xi\over 1-\xi}}d\xi
  =(t-\sigma)\int_{0}^{1}\xi^{{3\over 2}-1}(1-\xi)^{{1\over 2}-1} d\xi
  \nonumber\\
  &=& (t-\sigma) B({3\over 2}, {1\over 2})= (t-\sigma){\Gamma({3\over 2})\Gamma(({1\over 2})\over \Gamma(2)}=
  {\pi\over 2}(t-\sigma),
\end{eqnarray*}
where $B$ and $\Gamma$ are the known Beta and Gamma functions defined by
\begin{eqnarray*}
 B(x, y) &=& \int_{0}^{1} t^{x-1} (1-t)^{y-1} dt, \quad x>0, \quad y>0, \nonumber\\
 \Gamma(x) &=& \int_{0}^{+\infty} t^{x-1} e^{-t} dt, \quad x>0,
\end{eqnarray*}
with the well known relations
\begin{eqnarray*}
 B(x , y) ={\Gamma(x)\Gamma(y)\over \Gamma(x+y)}, \quad \Gamma(x+1)=x\Gamma(x) \quad\forall x>0,
 \quad \Gamma({1\over 2})= \sqrt{\pi}, \quad \Gamma(n+1)=n! \quad\forall n\in \bn.
\end{eqnarray*}

To prove (\ref{eq7}) we use the same change of variable, so we obtain
 \begin{eqnarray*}
   \int_{\sigma}^{t} {d\tau\over \sqrt{t-\tau}\sqrt{\tau-\sigma}}
   =\int_{0}^{1} {d\xi\over \sqrt{\xi(1-\xi)}}= B({1\over 2}, {1\over 2})
   = {\Gamma({1\over 2})\Gamma({1\over 2})\over \Gamma(1)}
   = \pi.
 \end{eqnarray*}
\end{proof}

\begin{theorem}\label{th2}
  $y$ is a solution to the singular ordinary differential equation {\rm(\ref{edo})} if and only if
 $y$ is a solution  to the Volterra integral equation {\rm(\ref{Vol})}.
\end{theorem}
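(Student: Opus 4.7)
The plan is to establish the equivalence through an auxiliary Volterra integral equation
\begin{equation*}
y(t) = 1 - \frac{4\lambda}{3\sqrt{\pi}}\, t^{3/2} + \frac{\lambda^{2}}{2}\int_{0}^{t} y(\sigma)(t-\sigma)^{2}\, d\sigma \qquad (\star),
\end{equation*}
and to prove the two equivalences (\ref{edo})\,$\Leftrightarrow (\star)$ and $(\star)\Leftrightarrow$\,(\ref{Vol}) separately.

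For $(\star)\Leftrightarrow$\,(\ref{edo}), successively differentiating $(\star)$ yields $y'(t) = -\tfrac{2\lambda\sqrt{t}}{\sqrt{\pi}} + \lambda^{2}\int_{0}^{t}\! y(\sigma)(t-\sigma)\,d\sigma$, then $y''(t) = -\tfrac{\lambda}{\sqrt{\pi}\sqrt{t}} + \lambda^{2}\int_{0}^{t}\! y(\sigma)\,d\sigma$, and finally $y'''(t) = \tfrac{\lambda}{2\sqrt{\pi}}\,t^{-3/2} + \lambda^{2}\,y(t)$. Evaluating at $t=0$ recovers the initial conditions $y(0)=1$, $y'(0)=0$; evaluating $y''$ at $t=1$ produces the integral boundary condition; the last identity is the ODE itself. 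For the converse, one integrates the ODE on $(0,t)$: since the antiderivative of $\tfrac{\lambda}{2\sqrt{\pi}}\,s^{-3/2}$ is the unbounded function $-\tfrac{\lambda}{\sqrt{\pi}\sqrt{s}}$, one absorbs the singularity into $y''$ and writes
\begin{equation*}
y''(t) + \frac{\lambda}{\sqrt{\pi}\sqrt{t}} = C + \lambda^{2}\!\int_{0}^{t}\! y(s)\,ds,\qquad C := \lim_{s\to 0^{+}}\!\Bigl(y''(s) + \frac{\lambda}{\sqrt{\pi}\sqrt{s}}\Bigr).
\end{equation*}
The boundary condition at $t=1$ then forces $C=0$. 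Two further integrations from $0$ to $t$, using $y'(0)=0$ and $y(0)=1$ together with a Fubini identity of the spirit of (\ref{eq3})--(\ref{eq4}) to rewrite $\int_{0}^{t}(t-\tau)\!\int_{0}^{\tau}\!y(s)\,ds\,d\tau = \tfrac{1}{2}\!\int_{0}^{t}\!y(s)(t-s)^{2}\,ds$, reproduce $(\star)$.

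For (\ref{Vol})\,$\Rightarrow (\star)$, I would substitute (\ref{Vol}) into its own integrand, apply Fubini, and evaluate the resulting inner integral
\begin{equation*}
\int_{\xi}^{t}\sqrt{t-\tau}\,\sqrt{\tau-\xi}\,d\tau = (t-\xi)^{2}\,B\!\left(\tfrac{3}{2},\tfrac{3}{2}\right) = \frac{\pi}{8}\,(t-\xi)^{2},
\end{equation*}
a Beta-function computation parallel to those in Lemma \ref{lem1}; the identity $(\star)$ drops out. For the converse $(\star)\Rightarrow$\,(\ref{Vol}), both (\ref{Vol}) and $(\star)$ are linear Volterra integral equations of the second kind (one with a weakly singular but $L^{1}$-integrable kernel $\sqrt{t-\sigma}$, the other with the continuous kernel $(t-\sigma)^{2}$), each admitting a unique continuous solution on every bounded interval by successive approximations; the forward implication forces the two unique solutions to coincide.

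The main technical delicacy is the treatment of the singular forcing $\tfrac{\lambda}{2\sqrt{\pi}}\,t^{-3/2}$: one must justify that for every classical solution of (\ref{edo}) the quantity $y''(t)+\tfrac{\lambda}{\sqrt{\pi}\sqrt{t}}$ extends continuously to $t=0$, so that the first integration produces a finite constant $C$ which the boundary condition then pins to zero. Once this regularity is in place, the remaining steps reduce to Fubini manipulations and Beta-function computations already provided by Lemma \ref{lem1}.
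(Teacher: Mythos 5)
Your argument is correct, and it takes a genuinely different route from the paper at the one step that matters. Both proofs begin the same way: integrating the ODE three times, using the integral boundary condition to kill $y^{(2)}(0)$ (equivalently, your constant $C$), and arriving at your auxiliary equation $(\star)$ --- note that your coefficient $\tfrac{\lambda^{2}}{2}$ is the correct one, whereas the paper's (\ref{eq10}) displays $\lambda^{2}\int_{0}^{t}y(\tau)(t-\tau)^{2}d\tau$ without the factor $\tfrac12$ (a misapplication of (\ref{eq4}), harmless there because (\ref{eq10}) is not used afterwards). The divergence is in how one passes from this stage to (\ref{Vol}): the paper introduces the auxiliary function $\varphi$ of (\ref{eq11}), shows it satisfies the homogeneous weakly singular Volterra relation $\varphi'(t)=\tfrac{\lambda}{\sqrt{\pi}}\int_{0}^{t}\varphi(\tau)(t-\tau)^{-1/2}d\tau$, and concludes from $\varphi^{(n)}(0)=0$ for all $n$ that $\varphi\equiv 0$ --- a step that as written needs analyticity (or, more honestly, the same Volterra uniqueness you invoke). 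You instead iterate the kernel of (\ref{Vol}) once, compute $\int_{\xi}^{t}\sqrt{t-\tau}\sqrt{\tau-\xi}\,d\tau=\tfrac{\pi}{8}(t-\xi)^{2}$ via the Beta function exactly in the spirit of (\ref{eq6})--(\ref{eq7}), obtain $(\star)$ as a consequence of (\ref{Vol}), and then close the loop by the standard existence--uniqueness theory for linear Volterra equations of the second kind with bounded kernels. This buys you a fully rigorous equivalence with no analyticity assumption and symmetric treatment of both directions; what it costs is the explicit appeal to Volterra uniqueness as an external (though entirely classical) ingredient, where the paper tries --- not quite successfully --- to stay within elementary Taylor-coefficient computations. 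Your handling of the singular forcing term, isolating the continuous extension of $y''(t)+\tfrac{\lambda}{\sqrt{\pi}\sqrt{t}}$ before integrating, is also more careful than the paper's direct integration in (\ref{eq8}).
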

\begin{proof}
 Firstly, we consider that $y$ is  a solution to the singular ordinary differential
 equation {\rm(\ref{edo})}. Then, by using an integration in variable $t$
 we obtain
  \begin{eqnarray}\label{eq8}
  y^{(2)}(t) = y^{(2)}(0)+ \lambda^{2} \int_{0}^{t} y(\tau)d\tau  -{\lambda\over \sqrt{\pi t}}, \quad t>0.
  \end{eqnarray}
  thus
  $$ y^{(2)}(1) = y^{(2)}(0)+ \lambda^{2} \int_{0}^{1} y(\tau)d\tau  -{\lambda\over \sqrt{\pi }}.$$

 And using the integral  boundary condition
 $$y^{(2)}(1)= -{\lambda\over\sqrt{\pi}} + \lambda^{2}\int_{0}^{1} y(t) dt $$
so $y^{(2)}(0)= 0$.  Thus taking this new condition into  account, from (\ref{eq8})
by  using an integration in variable $t$,  the condition $y'(0)=0$ and
(\ref{eq3}) we get
\begin{eqnarray}\label{eq9}
  y'(t) &=& \lambda^{2}\int_{0}^{t}\left( \int_{0}^{\tau} y(\sigma) d\sigma \right) d\tau
   -{2\lambda\sqrt{t}\over \sqrt{\pi}}
   \nonumber\\
   &=& \lambda^{2}\int_{0}^{t}y(\tau)(t-\tau) d\tau - {2\lambda\sqrt{t}\over \sqrt{\pi}}
   , \quad t>0.
  \end{eqnarray}
Finally, from (\ref{eq9}) by using a another integration in variable $t$, and the condition
$y(0)=1$, we obtain

\begin{eqnarray}\label{eq10}
  y(t) &=& 1+ \lambda^{2}   \int_{0}^{t}   \left( \int_{0}^{\tau} y(\sigma)(\tau-\sigma) d\sigma \right) d\tau
   -{4\lambda 3\over 3 \sqrt{\pi}}t^{3/2}
   \nonumber\\
   &=&  1+ \lambda^{2}\int_{0}^{t} y(\tau)(t-\tau)^{2} d\tau - {4\lambda\over 3\sqrt{\pi}}t^{3/2}
   , \quad t>0.
  \end{eqnarray}
We can not arrive directly to the Volterra  equation (\ref{Vol}), but we can define
the auxiliary function

\begin{eqnarray}\label{eq11}
 \varphi(t)= y(t) -1 + {2\lambda\over \sqrt{\pi}}  \int_{0}^{t} y(\tau)\sqrt{t-\tau}\,  d\tau
\end{eqnarray}
 and now our goal is to prove that $\varphi = 0$.  We have $\varphi(0)=0$, by using the boundary
 $y(0)=1$.

 Now, we compute the first derivative of  $\varphi$  using the property (\ref{eq5}), we get
 \begin{eqnarray}\label{eq12}
  \varphi'(t)&=& y'(t) + {\lambda\over \sqrt{\pi}}  \int_{0}^{t} {y(\tau)\over\sqrt{t-\tau}}\,  d\tau
 \nonumber\\
 &=& y'(t) + {\lambda\over \sqrt{\pi}} \left(2\sqrt{t} + 2 \int_{0}^{t} y'(\tau) \sqrt{t-\tau}\, d\tau\right)
\quad t>0.
 \end{eqnarray}

 From the other hand, by using  (\ref{eq11}), (\ref{eq9}), (\ref{eq12}) and the property  (\ref{eq6})
 we obtain
 \begin{eqnarray*}\label{eq13}
  \int_{0}^{t}  {\varphi(\tau)\over \sqrt{t-\tau}} d\tau
 & =&   \int_{0}^{t}  {y(\tau)\over \sqrt{t-\tau}} d\tau
    -2\sqrt{t} + {2\lambda\over \sqrt{\pi}}\int_{0}^{t}
    {\int_{0}^{\tau} y(\sigma)\sqrt{\tau-\sigma}d\sigma\over\sqrt{t-\tau}} d\tau
 \nonumber\\
 &=&  \int_{0}^{t}  {y(\tau)\over \sqrt{t-\tau}} d\tau
    -2\sqrt{t} +\lambda\sqrt{\pi}\int_{0}^{t} y(\tau)(t-\tau)\, d\tau
  \nonumber\\
 &=&
 \int_{0}^{t}  {y(\tau)\over \sqrt{t-\tau}} d\tau
    -2\sqrt{t} +\lambda\sqrt{\pi}\left({y'(t)\over \lambda^{2}} + {2\over \lambda\sqrt{\pi}}\sqrt{t}\right)
   \nonumber\\
 &=& \int_{0}^{t}  {y(\tau)\over \sqrt{t-\tau}} d\tau + {\sqrt{\pi}\over\lambda} y'(t)
 = {\sqrt{\pi}\over\lambda} \varphi'(t), \quad t>0.
 \end{eqnarray*}

 That is

 \begin{eqnarray}\label{eq14}
 \varphi'(t)= {\lambda\over\sqrt{\pi}}\int_{0}^{t} {\varphi(\tau)\over \sqrt{t-\tau}} d\tau,
 \quad t>0,
  \end{eqnarray}
 thus $\varphi'(0)=0$.  Therefore, we have
  \begin{eqnarray}\label{eq15}
   \varphi'(t)=  {\lambda\over\sqrt{\pi}}
   \int_{0}^{t} {\varphi(t-\tau)\over \sqrt{\tau}} d\tau,
 \quad t>0,
  \end{eqnarray}
and then we obtain

\begin{eqnarray}\label{eq15}
   \varphi^{(2)}(t)=  {\lambda\over\sqrt{\pi}}
   \int_{0}^{t} {\varphi'(t-\tau)\over \sqrt{\tau}}d\tau
   =
   {\lambda\over\sqrt{\pi}}
   \int_{0}^{t} {\varphi'(\tau)\over \sqrt{t-\tau}} d\tau,
 \quad t>0,
\end{eqnarray}

 thus $\varphi^{(2)}(0)=0$, and so  on  we obtain $\varphi^{(n)}(0)=0$ for all $n\in \mathbb{N}$,
 then this part holds.

 Secondly, we consider that $y$ is a solution of the Volterra integral equation (\ref{Vol}), then we have
 the condition $y(0)=1$ which is automatically satisfied.

 Then, by derivation of (\ref{Vol}) and by using the property (\ref{eq6})  we have

 \begin{eqnarray}\label{eq16}
  y'(t)&=& -{\lambda\over \sqrt{ \pi}}\int_{0}^{t} {y(\tau)\over \sqrt{t-\tau}} d\tau
   \nonumber\\
  &=& -{\lambda\over \sqrt{ \pi}}\left(
  2\sqrt{t} - {2\lambda\over \sqrt{\pi}}\left(
  \int_{0}^{t} {1\over \sqrt{t-\tau}}\left(
  \int_{0}^{\tau}  y(\sigma)\sqrt{\tau-\sigma}\, d\sigma
  \right)d\tau
  \right)\right)
  \nonumber\\
  &=&
  -{2\lambda\over \sqrt{ \pi}}\sqrt{t} +
  {2\lambda^{2}\over \pi}\int_{0}^{t}
  \left(
  \int_{\sigma}^{t} {\sqrt{\tau-\sigma} \over\sqrt{t-\tau}}d\tau
  \right)y(\sigma)\, d\sigma
   \nonumber\\
  &=&
  -{2\lambda\over \sqrt{ \pi}}\sqrt{t} + \lambda^{2}\int_{0}^{t}y(\sigma)(t-\sigma)\, d\sigma,
  \quad t>0,
 \end{eqnarray}
and the boundary condition $y'(0)=0$ holds. Therefore from (\ref{eq16}) we have

 \begin{eqnarray}\label{eq17}
  y^{(2)}(t)= -{\lambda\over \sqrt{\pi t}} + \lambda^{2}\int_{0}^{t} y(\tau) d\tau, \quad t>0,
 \end{eqnarray}
thus for $t=1$ we get the integral boundary condition.

Finally, from (\ref{eq17}) we have

 \begin{eqnarray}\label{eq18}
  y^{(3)}(t)= {\lambda\over \sqrt{\pi}} t^{-3/2} + \lambda^{2} y(t) , \quad t>0,
 \end{eqnarray}

 so the singular ordinary differential equation (\ref{edo}) holds, thus the  proof of the theorem
 is complet.
\end{proof}

\begin{theorem}\label{th3}
 The solution of the Volterra integral equation {\rm(\ref{Vol})} is given by the following expression
  \begin{eqnarray}\label{eq19}
  y(t) = I(t) - \sqrt{{2\over \pi}} J(t),  \quad t>0,
   \end{eqnarray}
with
 \begin{eqnarray}\label{eq20}
  I(t)= \sum_{n=0}^{+\infty} { (\lambda^{2/3} t)^{3n}\over (3n)!}
 \end{eqnarray}

  \begin{eqnarray}\label{eq21}
 J(t)= \sum_{n=0}^{+\infty} {(2 \lambda^{2/3} t)^{{3(2n+1)} \over 2}\over (3(2n+1))!!}
  \end{eqnarray}
  are series with infinite radii of convergence and we use the definition
  $$  (2n+1)!! = (2n+1)(2n-1)(2n-3)\cdots  5\cdot 3\cdot 1.$$
for compactness expression.
\end{theorem}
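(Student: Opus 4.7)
My plan is to produce the solution of (\ref{Vol}) explicitly as a power series in $t$ and then identify it with the expressions (\ref{eq20})-(\ref{eq21}). Because the Volterra kernel $\sqrt{t-\tau}$ raises the exponent of a monomial $\tau^{\alpha}$ by exactly $3/2$, Picard iteration of the operator $T(y)(t) := 1 - \frac{2\lambda}{\sqrt{\pi}}\int_0^t y(\tau)\sqrt{t-\tau}\,d\tau$ starting from $y_0\equiv 1$ produces only powers of the form $t^{3n/2}$. This motivates the ansatz $y(t)=\sum_{n=0}^{\infty} c_n\, t^{3n/2}$.

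The next step is to substitute this ansatz into (\ref{Vol}) and integrate term by term. The key computation is the Beta-function identity
\begin{equation*}
\int_0^t \tau^{3n/2}\sqrt{t-\tau}\,d\tau \;=\; B\!\left(\tfrac{3n}{2}+1,\tfrac{3}{2}\right)t^{(3n+3)/2} \;=\; \frac{\Gamma(\tfrac{3n}{2}+1)\,\Gamma(\tfrac{3}{2})}{\Gamma(\tfrac{3n}{2}+\tfrac{5}{2})}\,t^{(3n+3)/2},
\end{equation*}
which is immediate from the relations recalled in Lemma \ref{lem1}. Matching the coefficients of $t^{3n/2}$ on both sides yields $c_0=1$ together with the recursion $c_n = -\lambda\,c_{n-1}\,\Gamma(3(n-1)/2+1)/\Gamma(3(n-1)/2+5/2)$ for $n\geq 1$.

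Since $T$ shifts the exponent by $3/2$, the sequence $(c_n)$ splits according to the parity of $n$: the even-indexed terms $c_{2k}$ carry the integer powers $t^{3k}$ (and will combine into $I(t)$), while the odd-indexed terms $c_{2k+1}$ carry the half-integer powers $t^{(6k+3)/2}$ (and will combine into $-\sqrt{2/\pi}\,J(t)$). Telescoping the recursion in each subsequence and simplifying via $\Gamma(x+1)=x\Gamma(x)$, $\Gamma(1/2)=\sqrt{\pi}$ and the duplication-type identity $\Gamma(k+1/2)=(2k-1)!!\sqrt{\pi}/2^{k}$, one obtains the closed forms $c_{2k}=\lambda^{2k}/(3k)!$ and $c_{2k+1}=-\sqrt{2/\pi}\,(2\lambda^{2/3})^{(6k+3)/2}/(6k+3)!!$, which are exactly the general terms appearing in (\ref{eq20}) and (\ref{eq21}). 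An infinite radius of convergence on all of $\br$ for every real $\lambda$ then follows at once from the super-exponential growth of $(3k)!$ and $(6k+3)!!$ against the polynomial-in-$\lambda$ numerators.

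The step I expect to be the main obstacle is the bookkeeping in this telescoping: correctly pairing the alternating factors $\Gamma(3j/2+1)$ and $\Gamma(3j/2+5/2)$, whose arguments are alternately integer and half-integer as $j$ runs from $0$ to $n-1$, into the compact factorial and double-factorial expressions above, and isolating the numerical prefactor $\sqrt{2/\pi}$. Once this identification is done, the fact that $y=I-\sqrt{2/\pi}\,J$ satisfies (\ref{Vol}) is automatic from the coefficient matching.
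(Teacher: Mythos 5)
Your proposal is correct and is essentially the paper's own argument: the Adomian decomposition the paper invokes, with $y_0=1$ and $y_n(t)=-\frac{2\lambda}{\sqrt{\pi}}\int_0^t y_{n-1}(\tau)\sqrt{t-\tau}\,d\tau$, is exactly your Picard iteration, and each iterate is the monomial $c_n t^{3n/2}$ of your ansatz. The only difference is that you carry out the Beta-function coefficient computation explicitly, whereas the paper outsources it to the reference \cite{BoTA2016a}.
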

\begin{proof}
 By using the Adomian method \cite{Adm, waz1} we propose, for the solution of the Volterra integral equation
 (\ref{Vol}), the following serie of expansion functions given by
 $$  y(t) = \sum_{n=0}^{+\infty} y_{n}(t)$$
 and we obtain the following recurrence expansions :
 $$ y_{0}(t) = 1, \quad y_{n}(t) =  -{2\lambda\over \sqrt{\pi}}\int_{0}^{t} y_{n-1}(\tau) \sqrt{t-\tau} \, d\tau,  \quad \forall n\geq 1.$$
Then, following \cite{BoTA2016a} we obtain (\ref{eq19}) where $I(t)$ and $J(t)$ are given by (\ref{eq20}) and   (\ref{eq21})
 respectively, and the result holds.

 \bigskip

 The solution of the Volterra integral equation (\ref{Vol}) is the key in order to obtain the solution
 of the following nonclassical heat conduction problem given by

 \begin{eqnarray}
  u_{t}(x, t) - u_{xx}(x , t) &=& -\lambda \int_{0}^{t} u_{x}(0, \tau) d\tau, \quad x>0, \quad t>0,\label{ed}
  \\
  u(0, t) &=& 0,  \quad t>0,\label{bc}
  \\
  u(x , 0) &=& h_{0} >0, \quad x >0\label{ic},
 \end{eqnarray}
with a parameter $\lambda \in \br$.
 Then the solution of the problem above is given by
 \begin{eqnarray}\label{solu}
   u(x , t) = h_{0} erf\left( {x\over 2\sqrt{t}}\right)  - \lambda\int_{0}^{t} erf\left({x\over 2\sqrt{t-\tau}}\right) U(\tau) d\tau
  \end{eqnarray}
 where $U(t)$ is given by
 \begin{eqnarray}\label{U1}
 U(t)= {h_{0}\over \sqrt{\pi}} \int_{0}^{t} {g(\tau)\over \sqrt{t-\tau}} d\tau
  \end{eqnarray}
 and $g$ is the solution of the Volterra integral equation (\ref{Vol}). Moreover, the heat flux on $x=0$ is given by
 $$
 u_{x}(0, t)= U'(t)= {h_{0}\over \sqrt{\pi t}}-  h_{0} \lambda \int_{0}^{t} g(\tau) d\tau, \quad t>0.$$

 For the complet proof see \cite{BoTA2016a}.
\end{proof}

 \section{Dependence of the solution with respect to $\lambda$}

From now on, we will consider that the solution to the singular  ordinary differential equation of third order
with an integral boundary condition (\ref{edo}) or equivalently the  solution of the
Volterra integral equation (\ref{Vol}) depends also on the parameter $\lambda \in \br$.

We consider that $t\mapsto g_{\lambda}(t)$ be the solution of the Volterra integral equation (\ref{Vol}) for the
parameter $\lambda$.  For $\varepsilon \in (0 , 1)$ be a fixed real number and  $T>0$, let consider  the
parameter $\lambda$ such that
\begin{eqnarray}\label{lambda}
 |\lambda|\leq \lambda_{\varepsilon , T} = {3\sqrt{\pi}\over 4} {\varepsilon\over T^{3/2}},
\end{eqnarray}
and we define the norm
$$  \|g\|_{T}= \max_{0\leq t\leq T} |g(t)|.$$
Therefore, we obtain the following dependence   results.

 \begin{theorem}\label{th3.1}
  We have the boundedness
  \begin{eqnarray}\label{B}
   \|g_{\lambda}\|_{T}\leq {1\over 1-\varepsilon}, \quad \forall \lambda :  \quad |\lambda|\leq \lambda_{\varepsilon , T}.
 \end{eqnarray}
 Moreover the application $\lambda \mapsto g_{\lambda}(t)$   defined from $[-\lambda_{\varepsilon , T} ,  \lambda_{\varepsilon , T}]$,
 to ${\cal C}([0 , T))$ is Lipschitzian.
 \end{theorem}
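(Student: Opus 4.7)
The plan is to read both assertions directly off the Volterra equation (\ref{Vol}), exploiting the fact that the constant $\lambda_{\varepsilon,T}=\frac{3\sqrt{\pi}}{4}\frac{\varepsilon}{T^{3/2}}$ has been calibrated exactly so that the linear integral operator $K_\lambda g(t):=\frac{2\lambda}{\sqrt{\pi}}\int_0^t g(\tau)\sqrt{t-\tau}\,d\tau$ has $\|\cdot\|_T$-norm bounded by $\varepsilon<1$.

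For the boundedness (\ref{B}), I would take absolute values in (\ref{Vol}), bound $|g_\lambda(\tau)|$ by $\|g_\lambda\|_T$ on $[0,t]$, and compute
\[
\int_0^t \sqrt{t-\tau}\,d\tau=\tfrac{2}{3}t^{3/2}\leq \tfrac{2}{3}T^{3/2}.
\]
This gives $\|g_\lambda\|_T\leq 1+\frac{4|\lambda|T^{3/2}}{3\sqrt{\pi}}\|g_\lambda\|_T$, and the choice (\ref{lambda}) ensures that the coefficient $\frac{4|\lambda|T^{3/2}}{3\sqrt{\pi}}\leq \varepsilon<1$, so that one may subtract and divide by $1-\varepsilon$ to conclude $\|g_\lambda\|_T\leq 1/(1-\varepsilon)$. (A small subtlety: to make this a legitimate rearrangement one should first argue that $\|g_\lambda\|_T$ is finite, which follows either from continuity of $g_\lambda$ on the compact interval $[0,T]$ or by running the same estimate on $[0,t]$ for $t\in[0,T]$ and using that $t\mapsto g_\lambda(t)$ is continuous.)

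For the Lipschitz property, given $\lambda_1,\lambda_2$ with $|\lambda_i|\leq \lambda_{\varepsilon,T}$ and $g_i=g_{\lambda_i}$, I would subtract the two Volterra equations to get
\[
g_1(t)-g_2(t)=-\frac{2\lambda_1}{\sqrt{\pi}}\int_0^t (g_1(\tau)-g_2(\tau))\sqrt{t-\tau}\,d\tau-\frac{2(\lambda_1-\lambda_2)}{\sqrt{\pi}}\int_0^t g_2(\tau)\sqrt{t-\tau}\,d\tau.
\]
Taking the $\|\cdot\|_T$-norm, the first term contributes at most $\varepsilon\|g_1-g_2\|_T$ (as in the boundedness argument), while the second is bounded using (\ref{B}) applied to $g_2$ by $\frac{4T^{3/2}}{3\sqrt{\pi}}\cdot\frac{1}{1-\varepsilon}|\lambda_1-\lambda_2|$. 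Rearranging yields
\[
\|g_{\lambda_1}-g_{\lambda_2}\|_T\leq \frac{4T^{3/2}}{3\sqrt{\pi}(1-\varepsilon)^2}\,|\lambda_1-\lambda_2|,
\]
which is the desired Lipschitz estimate with an explicit constant.

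There is really no substantive obstacle here: once $\lambda_{\varepsilon,T}$ is chosen so that the integral operator is a strict contraction on $(C([0,T]),\|\cdot\|_T)$, both bounds fall out of a single application of the triangle inequality followed by the elementary integral $\int_0^t\sqrt{t-\tau}\,d\tau=\frac{2}{3}t^{3/2}$. The only care point is to keep $t\in[0,T]$ throughout so that the uniform bound $\frac{4|\lambda|T^{3/2}}{3\sqrt{\pi}}\leq\varepsilon$ controls every occurrence of the Abel-type kernel; passing from continuity on $[0,T]$ to continuity on $[0,T)$ (as in the statement) is automatic.
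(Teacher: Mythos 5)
Your proposal is correct and follows essentially the same route as the paper: take absolute values in the Volterra equation, use $\int_0^t\sqrt{t-\tau}\,d\tau=\frac{2}{3}t^{3/2}\leq\frac{2}{3}T^{3/2}$ together with the calibration $\frac{4}{3\sqrt{\pi}}\lambda_{\varepsilon,T}T^{3/2}=\varepsilon$ to get the bound, and then subtract the two Volterra equations and absorb the contraction term to get the Lipschitz estimate with the same constant $\frac{4T^{3/2}}{3\sqrt{\pi}(1-\varepsilon)^2}$. Your added remark on first justifying finiteness of $\|g_\lambda\|_T$ before rearranging is a legitimate care point that the paper passes over silently.
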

\begin{proof}
 From the Volterra integral equation (\ref{Vol}) we obtain
  \begin{eqnarray*}\label{BB}
   |g_{\lambda}(t)|&\leq&  1+ {2|\lambda|\over \sqrt{\pi}} \|g_{\lambda}\|_{t}\int_{0}^{t}\sqrt{t-\tau} \, d\tau
   \nonumber\\
   &\leq&  1+ {4\over 3\sqrt{\pi}} \lambda_{\varepsilon , T} T^{3/2} \|g_{\lambda}\|_{T}
  \end{eqnarray*}
and by using (\ref{lambda}) follows (\ref{B}).
Moreover, consider $g_{i}(t)$ the solution of the Volterra integral equation (\ref{Vol}) for $\lambda_{i}$ (i= 1, 2)), such that
$$| \lambda_{i}|\leq  \lambda_{\varepsilon , T}.$$
 Then, we have
  \begin{eqnarray*}
   |g_{2}(t)- g_{1}(t)| &\leq & {2\over \sqrt{\pi}}|\lambda_{2}-\lambda_{1}| \|g_{1}\|_{t}\int_{0}^{t} \sqrt{t-\tau} \, d\tau
     +{2|\lambda_{2}|\over \sqrt{\pi}}   \|g_{1}- g_{2}\|_{t}\int_{0}^{t} \sqrt{t-\tau} \, d\tau
     \nonumber\\
    &\leq& {4 T^{3/2}\over 3\sqrt{\pi}} \left[  |\lambda_{2}-\lambda_{1}| \|g_{1}\|_{T}+ |\lambda_{2}| \|g_{2}- g_{1}\|_{T}\right].
  \end{eqnarray*}
Therefore, we get
 \begin{eqnarray}\label{eq3.3}
  \|g_{2}- g_{1}\|_{T} \leq {4\over 3\sqrt{\pi}}{T^{3/2}\over (1-\varepsilon)^{2}}|\lambda_{2}-\lambda_{1}|
 \end{eqnarray}
  thus the result holds.
\end{proof}

Now, we obtain the dependence of the solution to the nonclassical heat conduction problem (\ref{ed})-(\ref{ic})
with respect to the parameter $\lambda$.  We consider that $U_{\lambda}$ and $u_{\lambda}$ are  given respectively by

  \begin{eqnarray}\label{U34}
 U_{\lambda}(t)= {h_{0}\over \sqrt{\pi}}\int_{0}^{t} {g_{\lambda}(\tau)\over \sqrt{t-\tau}} d\tau
  \end{eqnarray}

 and

 \begin{eqnarray}\label{u2}
   u_{\lambda}(x, t)=  h_{0} erf\left({x\over 2\sqrt{t}} \right)-
 \lambda \int_{0}^{t} erf\left({x\over 2\sqrt{t-\tau}} \right)  U_{\lambda}(\tau) d\tau.
 \end{eqnarray}

  Then, we obtain the following results:

   \begin{theorem}\label{th3.2}
    We have the boundedness
    \begin{eqnarray}\label{3.4}
     \|U_{\lambda}\|_{T} \leq {2h_{0}\over \sqrt{\pi}}{T^{1/2}\over 1-\varepsilon},
     \quad \forall \lambda : \quad |\lambda| \leq \lambda_{\varepsilon, T}.
    \end{eqnarray}
Moreover, the application $\lambda \mapsto U_{\lambda}(t)$, from $[-\lambda_{\varepsilon, T}, \lambda_{\varepsilon, T} ]$ to
${\cal C}([0 , T])$ is Lipschitzian.
We have also the following  boundedness
 \begin{eqnarray}\label{3.6}
  \|u_{\lambda}\|_{[0 , +\infty[\times[0, T]}\leq h_{0} \left(1+  {3 \varepsilon\over 2( 1-\varepsilon)}\right)
  \quad \forall \lambda : \quad |\lambda| \leq \lambda_{\varepsilon, T},
 \end{eqnarray}
the estimates
 \begin{eqnarray}\label{3.7}
\|u_{\lambda}- u_{0}\|_{[0, +\infty[\times[0, T]}\leq   {2h_{0}\over \sqrt{\pi}}{T^{3/2}\over 1-\varepsilon}|\lambda|,
\quad \forall \lambda : \quad |\lambda| \leq \lambda_{\varepsilon, T},
\end{eqnarray}
and  that the application $\lambda \mapsto u_{\lambda}(x, t)$, from $[-\lambda_{\varepsilon, T}, \lambda_{\varepsilon, T}  ]$ to
${\cal C}([0 , +\infty[ \times  [0 , T])$ is Lipschitzian.
   \end{theorem}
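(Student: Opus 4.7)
My plan is to chase each of the five assertions in turn directly from the defining formulas (\ref{U34}) and (\ref{u2}), combining the uniform bound $\|g_\lambda\|_T\le 1/(1-\varepsilon)$ and the Lipschitz estimate (\ref{eq3.3}) from Theorem \ref{th3.1} with the trivial bound $|\mathrm{erf}(s)|\le 1$ and the elementary identity $\int_0^t(t-\tau)^{-1/2}\,d\tau=2\sqrt{t}$. These three ingredients will give every estimate after routine integration.

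For the bound (\ref{3.4}), pulling $|g_\lambda(\tau)|$ out of the integral in (\ref{U34}) gives $|U_\lambda(t)|\le \tfrac{h_0}{\sqrt{\pi}}\|g_\lambda\|_T\cdot 2\sqrt{t}$, and inserting (\ref{B}) yields the claim. For the Lipschitz property of $\lambda\mapsto U_\lambda$, subtract the two defining integrals, factor $g_{\lambda_2}-g_{\lambda_1}$ under the integral sign, and apply (\ref{eq3.3}); the resulting Lipschitz constant will be of the form $C\,T^{2}/(1-\varepsilon)^{2}$ with $C$ an absolute constant.

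For the bound (\ref{3.6}), use $|\mathrm{erf}|\le 1$ in (\ref{u2}) to obtain $|u_\lambda(x,t)|\le h_0+|\lambda|\,\|U_\lambda\|_T\,T$, then substitute (\ref{3.4}) and the constraint $|\lambda|\le \lambda_{\varepsilon,T}=\tfrac{3\sqrt{\pi}}{4}\,\varepsilon/T^{3/2}$; the powers of $T$ cancel exactly and produce the constant $h_0\bigl(1+\tfrac{3\varepsilon}{2(1-\varepsilon)}\bigr)$. For the comparison estimate (\ref{3.7}), note that (\ref{Vol}) forces $g_0\equiv 1$ at $\lambda=0$, so $u_0(x,t)=h_0\,\mathrm{erf}(x/(2\sqrt{t}))$ and $u_\lambda-u_0$ is precisely the second term in (\ref{u2}); bounding $|\mathrm{erf}|\le 1$ and inserting (\ref{3.4}) gives (\ref{3.7}) directly.

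Finally, for the Lipschitz property of $\lambda\mapsto u_\lambda$, I would write $u_{\lambda_2}-u_{\lambda_1}$ as the sum of a term proportional to $(\lambda_2-\lambda_1)\int_0^t\mathrm{erf}(\cdot)\,U_{\lambda_1}(\tau)\,d\tau$ and a term $\lambda_2\int_0^t\mathrm{erf}(\cdot)\,(U_{\lambda_2}-U_{\lambda_1})(\tau)\,d\tau$; the first is controlled by (\ref{3.4}), the second by the Lipschitz estimate for $U_\lambda$ just established, and summing yields a Lipschitz constant linear in $|\lambda_2-\lambda_1|$. None of the individual steps is hard; the only place care is needed is to align the choice $\lambda_{\varepsilon,T}=\tfrac{3\sqrt{\pi}}{4}\varepsilon/T^{3/2}$ with the time integrations so that the $T$-powers cancel in (\ref{3.6}) to produce the clean constant $h_0\bigl(1+\tfrac{3\varepsilon}{2(1-\varepsilon)}\bigr)$ rather than something depending on $T$.
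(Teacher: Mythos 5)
Your proposal is correct and follows essentially the same route as the paper: each estimate is obtained by pulling $\|g_\lambda\|_T$ (or $\|g_2-g_1\|_T$) out of the integral in (\ref{U34}), using $\int_0^t (t-\tau)^{-1/2}\,d\tau = 2\sqrt{t}$ and $|\mathrm{erf}|\le 1$, and then invoking the bound (\ref{B}), the Lipschitz estimate (\ref{eq3.3}), and the constraint $|\lambda|\le \lambda_{\varepsilon,T}$ so that the powers of $T$ cancel in (\ref{3.6}); the final Lipschitz claim uses exactly the paper's splitting of $u_{\lambda_2}-u_{\lambda_1}$ into a $(\lambda_2-\lambda_1)$-term and a $(U_{\lambda_2}-U_{\lambda_1})$-term.
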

\begin{proof}
 From (\ref{U1}) we have
 $$ |U_{\lambda}(t) \leq {h_{0}\over \sqrt{\pi}}\|g_{\lambda}\|_{t}\int_{0}^{t} {d\tau\over \sqrt{t-\tau}}
 \leq  {2 h_{0}\over \sqrt{\pi}}{T^{1/2}\over 1-\varepsilon} $$
 thus (\ref{3.4}) holds.
 Consider now $U_{i}(t)$ given by (\ref{U34}), for $\lambda_{i}$  ($i=1, 2$) satisfying $|\lambda_{i}|\leq \lambda_{\varepsilon, T}.$
 We have
 \begin{eqnarray}
  |U_{2}(t) -U_{1}(t)| \leq {2h_{0} T^{1/2}\over \sqrt{\pi}} \|g_{2}- g_{1}\|_{T}
  \leq {8h_{0} T^{2}\over 3\pi (1-\varepsilon)^{2}} |\lambda_{2}-\lambda_{1}|
 \end{eqnarray}
thus the application $\lambda \mapsto U_{\lambda}$  is Lipschitzian.

From (\ref{u2}) we have
\begin{eqnarray*}
 |u_{\lambda}(x, t)|\leq h_{0} + t |\lambda| \|U_{\lambda}\|_{t}
 \leq h_{0} \left(1+  {3 \varepsilon\over 2(1-\varepsilon)}\right),
 \quad \forall x\in[0 , +\infty[, \quad \forall t\in [0 , T],
\end{eqnarray*}
thus (\ref{3.6}) holds.

From (\ref{u2}) also, we have
\begin{eqnarray*}
 |u_{\lambda}(x , t) - u_{0}(x , t)| \leq t |\lambda| \|U_{\lambda}\|_{t}
 \leq {2h_{0}\over \sqrt{\pi}}{T^{3/2}\over 1-\varepsilon} |\lambda|
\end{eqnarray*}

thus (\ref{3.7}) holds.

Consider now $u_{i}(x , t)$ given by (\ref{u2}) for $\lambda_{i}$
($i=1, 2$) satisfying $|\lambda_{i}|\leq   \lambda_{\varepsilon ,
T}$.  Then, we have
\begin{eqnarray*}
 |u_{2}(x , t) - u_{1}(x , t)| &\leq&  t|\lambda_{2}-\lambda_{1}| \|U_{1}\|_{t}  +  t|\lambda_{2}|\|U_{2}-U_{1}\|_{t}
 \nonumber\\
&\leq & T |\lambda_{2}-\lambda_{1}|  \|U_{1}\|_{T} + T|\lambda_{2}|\|U_{2}-U_{1}\|_{T}
 \nonumber\\
&\leq &  {2 h_{0}  T^{1/2}\over \sqrt{\pi} (1-\varepsilon)} (T+ {\varepsilon\pi\over 1-\varepsilon}) |\lambda_{2}-\lambda_{1}|
\quad \forall x\in [0, +\infty[, \quad \forall t\in [0 ,T],
\end{eqnarray*}

thus

$$
\|u_{2}-u_{1}\|_{[0 , +\infty[\times [0 , T]}
\leq {2 h_{0}  T^{1/2}\over \sqrt{\pi} (1-\varepsilon)}\left({\varepsilon\pi \over 1-\varepsilon} +T\right)
    |\lambda_{2}-\lambda_{1}|
$$
and the result holds.
\end{proof}

\bigskip
{\bf Conclusion} We have obtained the equivalence between a family of singular  ordinary differential equations
 of third order
with an integral boundary condition (\ref{edo}) and the Volterra integral equation (\ref{Vol}) with a parameter $\lambda\in \br$. We have also given the explicit solution of these equations and then some nonclassical heat conduction problems can be solved  explicitely, for any  real parameter $\lambda$.
Finally, we have established  the dependence of the family of singular  differential equations  of third order with respect to the parameter $\lambda$.

\bigskip

\noindent{\bf Acknowledgements:}
 This paper was partially sponsored by the Institut Camille Jordan St-Etienne University for first author,
 and the projects PIP $\#$ 0275 from CONICET-Austral (Rosario, Argentina) and Grant AFOSR-SOARD FA 9550-14-1-0122
 for the second author.

\end{document}